\newtheorem{thm}[equation]{Theorem}
\newtheorem*{thm*}{Theorem}
\newtheorem{thmA}{Theorem}
\newtheorem{lem}[equation]{Lemma}
\theoremstyle{definition}
\numberwithin{equation}{section}
\DeclareMathOperator{\lk}{lk}
\DeclareMathOperator{\geo}{geo}
\newcommand{\ph}{\varphi}
\newcommand{\epsi}{\varepsilon}
\begin{document}
\title{Appendix: The Gromov--Guth--Whitney embedding theorem}
\author{Fedor Manin and Shmuel Weinberger}
\maketitle

\section{Summary}

By using a different method of embedding manifolds in Euclidean space, the bound
of Theorem A can be improved to achieve one tantalizingly close to Gromov's
linearity conjecture:
\begin{thmA} \label{thm:cob}
  Every closed smooth nullcobordant manifold of complexity $V$ has a filling of
  complexity at most $\ph(V)$, where $\ph(V)=o(V^{1+\epsi})$ for every $\epsi>0$.
\end{thmA}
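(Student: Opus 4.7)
The approach is to keep the overall strategy of the proof of Theorem A but replace the initial embedding step with a geometrically efficient one. Concretely, the plan has three stages. First, given a closed $n$-manifold $M$ of complexity $V$, I invoke the Gromov--Guth--Whitney embedding theorem (the main technical result of this appendix) to produce an embedding $\iota : M \hookrightarrow \mathbb{R}^{2n+1}$ whose image has $n$-dimensional volume $\ph(V) = o(V^{1+\epsi})$ and which admits a tubular neighborhood of comparable geometric complexity. This is the new input that drives the sharper bound.

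Next, I apply the Pontryagin--Thom construction: the embedding, together with its (stably trivialized) normal bundle, induces a collapse map $\tau : S^{2n+1} \to MO(n+1)$ (or the corresponding oriented Thom space, depending on the cobordism theory at hand). The Lipschitz constant of $\tau$ is controlled by the inverse injectivity radius of the tubular neighborhood, hence by $\ph(V)^{1/n}$ up to constants. Nullcobordism of $M$ is equivalent to $\tau$ being nullhomotopic.

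I then appeal to the quantitative nullhomotopy machinery used in the proof of Theorem A to find a nullhomotopy $H : D^{2n+2} \to MO(n+1)$ whose Lipschitz constant exceeds that of $\tau$ by only a subpolynomial factor. A transverse perturbation of $H$ has as preimage of the zero section a compact $(n+1)$-manifold $W$ with $\partial W = M$, and the complexity of $W$ is bounded by the geometric data of $H$. A short calculation then yields a filling complexity of $\ph(V) \cdot V^{o(1)} = o(V^{1+\epsi})$, as required.

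The principal obstacle is the Gromov--Guth--Whitney embedding itself, which forms the technical heart of the appendix: it combines Gromov's waist inequalities, Guth's volume estimates, and a quantitative refinement of Whitney's classical embedding trick, and must be proved afresh since off-the-shelf embeddings give only polynomial (rather than near-linear) volume. Stages 2 and 3 parallel their analogues in Theorem A and require only mild adjustments, though one should verify that the Thom-space target---which is not simply connected in the stable range relevant here---does not contribute a superpolynomial factor to the quantitative nullhomotopy, a point that can be handled by working one stage of the Thom spectrum at a time.
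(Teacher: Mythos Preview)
Your high-level architecture---replace the embedding step, then run Pontryagin--Thom and quantitative nullhomotopy---matches the paper.  But there is a genuine gap in your accounting that makes the argument fail as written: you fix the ambient dimension at $N=2n+1$.

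The paper's black-box statement is that a $1$-thick embedding of $M^n$ into an $R$-ball in $\mathbb{R}^N$ yields a filling of complexity $\lesssim R^{N+1}$ (oriented) or $R^N$ (unoriented); this exponent reflects the number of simplices in a triangulation of $D^{N+1}$ at the scale of the nullhomotopy, and it cannot be improved.  The Gromov--Guth--Whitney embedding gives $R \sim V^{1/(N-n)}(\log V)^{2n+2}$.  Plugging in $N=2n+1$ yields $R\sim V^{1/(n+1)}$ up to logs, hence a filling of complexity $\sim R^{2n+2}\sim V^2$---quadratic, not near-linear.  The paper's actual mechanism for getting $o(V^{1+\epsi})$ is to take $N$ \emph{large depending on $\epsi$}: then the filling has complexity
\[
C(n,N)\,V^{\,1+\frac{n+1}{N-n}}\,(\log V)^{(N+1)(2n+2)},
\]
and choosing $N$ so that $(n+1)/(N-n)<\epsi$ gives the result.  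Nothing in your three stages exploits this freedom, and your ``short calculation'' leading to $\ph(V)\cdot V^{o(1)}$ does not survive scrutiny once the exponent $N+1$ is tracked.

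Relatedly, the quantity you isolate---the $n$-dimensional volume of the embedded image---is not the one that controls the filling.  That volume is trivially $\sim V$ for any bilipschitz embedding; what matters is the radius $R$ of the ambient ball relative to the thickness (equivalently, the Lipschitz constant of the collapse map on a unit sphere), together with the ambient dimension $N$.  Your estimate ``Lipschitz constant $\sim \ph(V)^{1/n}$'' does not follow from anything the embedding theorem provides.  Finally, a minor correction: the Gromov--Guth input is a random-linear-embedding and coloring argument, not waist inequalities.
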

As with the original Theorem A, this holds for both unoriented and oriented
cobordism.

Recall that the polynomial bound on the complexity of a nullcobordism follows
from a quantitative examination of the method of Thom:
\begin{enumerate}
\item One embeds the manifold $M$ in $S^N$, with some control over the shape of a
  tubular neighborhood.
\item This induces a geometrically controlled map from $S^N$ to the Thom space of
  a Grassmannian; one constructs a controlled extension of this map to $D^{N+1}$.
\item Finally, from a simplicial approximation of this nullhomotopy, one can
  extract a submanifold of $D^{N+1}$ which fills $M$ and whose volume is bounded
  by the number of simplices in the approximation.
\end{enumerate}
Part (2) is the result of the quantitative algebraic topology done to control
Lipschitz constants of nullhomotopies.  Abstracting away the method of embedding,
we extract the following:
\begin{thm*}
  Let $M^n$ be an oriented closed smooth nullcobordant manifold which embeds with
  thickness 1 in a ball in $\mathbb{R}^N$ of radius $R$; that is, there is an
  embedding whose exponential map on the unit ball normal bundle is also an
  embedding.  Then $M$ has a filling of complexity at most $C(n,N)R^{N+1}$.  (For
  unoriented cobordism, $C(n,N)R^N$ is sufficient.)
\end{thm*}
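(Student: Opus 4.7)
The plan is to execute, quantitatively, each of the three steps of Thom's construction outlined in the introduction; the thickness-1 hypothesis plays the role of the controlled tubular neighborhood needed for the Pontryagin--Thom collapse.

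First, I would construct the collapse map. The thickness-1 embedding $\iota: M \hookrightarrow B^N(R)$ identifies a unit-radius tubular neighborhood $\nu$ of $M$, namely the image of the unit normal disk bundle under the exponential map. Composing the Gauss map $M \to \Gr_{N-n}(\mathbb{R}^N)$ with the fiberwise identification of $\nu$ with the pullback of the universal $(N-n)$-disk bundle, and then collapsing $B^N(R) \setminus \nu$ to the basepoint, produces a map $\tau : B^N(R)/\partial B^N(R) \cong S^N \to MSO(N-n)$ (or $MO(N-n)$ in the unoriented case) whose Lipschitz constant depends only on $n$ and $N$. Since $M$ is nullcobordant, $[\tau] \in \pi_N(MSO(N-n)) \cong \Omega_n^{SO}$ vanishes, so $\tau$ admits some nullhomotopy.

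Next, I would apply the quantitative nullhomotopy machinery from the body of the paper to produce an extension $F : D^{N+1} \to MSO(N-n)$ with Lipschitz constant $O(1)$ (independent of $R$), defined on a disk of $(N+1)$-volume $O(R^{N+1})$. In the unoriented case, because $MO$ is a product of Eilenberg--MacLane spaces of type $K(\mathbb{Z}/2, j)$, mod $2$ obstruction theory allows one to arrange the nullhomotopy on a domain of one lower dimension, with $N$-volume $O(R^N)$. Then perturb $F$ to be smooth and transverse to the zero section $BSO(N-n) \subset MSO(N-n)$; the preimage $W := F^{-1}(BSO(N-n))$ is a smooth $(n+1)$-manifold with $\partial W = M$, i.e., a nullcobordism of $M$. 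Since $F$ is $O(1)$-Lipschitz, a simplicial approximation at unit mesh triangulates the domain with the claimed number of simplices, and $W$ inherits a triangulation with at most that many simplices, bounding its complexity.

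The main obstacle is the middle step: keeping the Lipschitz constant of $F$ bounded by $O(1)$ rather than growing with $R$. This is precisely what the quantitative nullhomotopy results of the paper are designed to provide, and the dimension drop from $R^{N+1}$ for $MSO$ to $R^N$ for $MO$ directly reflects the fact that $MO$ is rationally trivial while $MSO$ is not.
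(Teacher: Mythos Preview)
Your proposal is correct and follows exactly the three-step Thom outline the paper gives immediately before the statement: Pontryagin--Thom collapse using the thickness-1 tubular neighborhood, quantitative nullhomotopy into the Thom space, and extraction of the filling via simplicial approximation. Your explanation of the $R^{N+1}$ versus $R^N$ dichotomy via the rational triviality of $MO$ (as a product of mod-2 Eilenberg--MacLane spaces) is the right mechanism, though the phrasing ``domain of one lower dimension'' should really read: the nullhomotopy can be taken over $S^N \times [0,1]$ with $O(1)$ Lipschitz constant, so the $(N{+}1)$-volume is $O(R^N)$ rather than $O(R^{N+1})$.
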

\noindent This is optimal in the sense that the asymptotics of the estimates in
steps (2) and (3) cannot be improved.  Then to prove Theorem \ref{thm:cob}, we
simply need the following estimate, which may also be of independent interest.
\begin{thmA}
  Let $M$ be a closed Riemannian $n$-manifold of complexity $V$.  Then for every
  $N \geq 2n+1$, $M$ has a smooth 1-thick embedding $g:M \to \mathbb{R}^N$ into a
  ball of radius
  $$R=C(n,N)V^{\frac{1}{N-n}}(\log V)^{2n+2}.$$
\end{thmA}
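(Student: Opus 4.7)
The plan is a multi-scale randomized construction in the spirit of Gromov's and Guth's efficient thick embeddings. The heuristic is that the ``room'' available to a $1$-thick embedding lies in codimension $N-n$, so a natural target is $V^{1/(N-n)}$; this will be achieved up to polylog factors by assembling $M$ scale by scale.

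\textbf{Hierarchical decomposition.} By the complexity hypothesis, $M$ admits a maximal $1$-separated net $\{p_i\}_{i=1}^K$ with $K=O(V)$, and the exponential map at each $p_i$ is a bounded bilipschitz chart on a unit ball. Organize the net into a rooted tree of depth $L=O(\log V)$ by greedy ball-packing: a node at depth $L-k$ represents a cluster of $O(2^{kn})$ points of $M$-diameter $\sim 2^k$; the leaves are the individual $p_i$ and the root is $M$.

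\textbf{Inductive embedding.} Build $g\colon M\to\mathbb{R}^N$ by induction on the scale $k$. At scale $0$, embed each $B(p_i,1)$ into a unit ball of $\mathbb{R}^N$ via the normal chart, with thickness~$1$. Given $1$-thick embeddings of the scale-$(k-1)$ child clusters into balls of radius $\rho_{k-1}$, assemble a scale-$k$ parent by placing each of its $m\lesssim 2^n$ children at a uniformly random position inside a ball of radius $\rho_k$, then interpolating smoothly along a bounded-geometry spanning network inside $M$. A dimension count shows that random placement keeps the $1$-neighborhoods of the children disjoint once $\rho_k^{N-n}\gtrsim m\,\rho_{k-1}^{N-n}$, times a polylog factor from the union bound. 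The exponent $N-n$ is crucial: each child occupies $n$ of the $N$ ambient dimensions locally, leaving $N-n$ normal directions for separation. Iterating over $L=O(\log V)$ scales telescopes to $R=\rho_L\lesssim V^{1/(N-n)}(\log V)^{2n+2}$.

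\textbf{Main obstacle.} The hardest part is maintaining thickness $1$ uniformly: random placement separates any given pair with high probability, but one must avoid failure over all $\sim V^2$ pairs at all $\sim\log V$ scales, and the smoothing between clusters risks pinching the normal exponential map. Three ingredients handle this: a union bound, which contributes one polylog factor; the hypothesis $N\ge 2n+1$, which guarantees that generic projections of the tangent and second-fundamental-form data yield an immersion of bounded normal injectivity; and a careful smoothing at each interpolation whose Lipschitz losses must not accumulate across scales. Tracking these losses produces the exponent $2n+2$, combining the jet of data needed to detect thickness with an additional logarithm from the union bound over scales. The smoothing/thickness-preservation bookkeeping is the genuinely delicate step; everything else is a routine, if intricate, packing estimate.
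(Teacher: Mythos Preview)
Your approach is genuinely different from the paper's and, as sketched, has a real gap.

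The paper does not build the embedding scale by scale. It (i) triangulates $M$ by a simplicial complex $X$ with $O(V)$ vertices and bounded local combinatorics, (ii) runs the Gromov--Guth random-vertex construction \emph{once} on $X$, strengthened by an additional ``thick links'' condition that bounds the angles between adjacent simplices, to get a $1$-thick PL embedding of a subdivision $X'$ into a ball of the stated radius, and then (iii) smooths this PL embedding via a compactness argument over a finite atlas. The polylog $(\log V)^{2n+2}$ comes from a single local-perturbation lemma that converts a sparse linear map (each unit ball meets $\lesssim\log V$ simplices) into a thick one; it is not produced by iterating over levels.

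The gap in your scheme is exactly the accumulation you mention but do not control. Your recursion reads $\rho_k^{\,N-n}\ge C\,m\,\rho_{k-1}^{\,N-n}$ with $m\sim 2^n$ and depth $L\sim(\log V)/n$. Any implied constant $C>1$ yields $C^{L}=V^{(\log_2 C)/n}$, a genuine extra power of $V$; and the per-level polylog factor you invoke for the union bound is worse still, since $(\log V)^{aL}=V^{(a/n)\log\log V}$. So the product does not collapse to $V^{1/(N-n)}$ times a polylog unless every stage is lossless, which random placement followed by smoothing cannot deliver. A second, independent difficulty is the ``interpolation along a bounded-geometry spanning network'': the child clusters are not disjoint objects but overlapping pieces of a connected manifold, and once you have placed them at independent random positions you must $1$-thickly embed the connecting regions while matching what has already been laid down---this is where the entire problem lives, and the outline says nothing about how to do it without incurring precisely the per-level losses above. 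The paper sidesteps both issues by doing all the randomness in one shot at the PL level (vertices placed once, simplices filled in linearly) and isolating the passage to smoothness in a separate compactness step, with the thick-links condition serving as the bridge between the two.
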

This then implies that for every $N$, $M$ has a filling of complexity at most
$$C(n,N)V^{1+\frac{n+1}{N-n}}(\log V)^{(N+1)(2n+2)},$$
proving Theorem \ref{thm:cob}.

The embedding estimate is in turn derived from a similar estimate of Gromov and
Guth \cite{GrGu} for piecewise linear embeddings of simplicial complexes.  The
combinatorial notion of thickness used in that paper does not immediately
translate into a bound on the thickness of a smoothing.  Rather, in order to
prove our estimate we first prove a version of Gromov and Guth's theorem, largely
using their methods, with a stronger notion of thickness which controls what
happens near every simplex.  We then translate this into the smooth world using
the following result.
\begin{thmA}[Corollary of {\cite[Thm.~3]{BDG}}] \label{thm:tri}
  Every Riemannian $n$-manifold of bounded geometry and volume $V$ is
  $C(n)$-bilipschitz to a simplicial complex with $C(n)V$ vertices with each
  vertex lying in at most $L(n)$ simplices.  In particular, every smooth
  $n$-manifold of complexity $V$ has a triangulation with $C(n)V$ vertices and
  each vertex lying in at most $L(n)$ simplices.
\end{thmA}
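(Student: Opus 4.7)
The plan is to invoke \cite[Thm.~3]{BDG} essentially verbatim for the first assertion, and then deduce the second sentence by unwinding the definition of complexity for smooth manifolds.

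For the first assertion, let $M$ be a Riemannian $n$-manifold of bounded geometry (bounded sectional curvature, injectivity radius bounded below by some $r_0(n)>0$) and total volume $V$. The theorem of Boissonnat--Dyer--Ghosh constructs a simplicial complex $K$ bilipschitz equivalent to $M$, with constant depending only on $n$, by taking a Delaunay-type complex of a sufficiently dense net of points on $M$. First I would fix a maximal $\epsilon$-net for some $\epsilon = \epsilon(n)$ small enough that the hypotheses of \cite[Thm.~3]{BDG} apply. A volume packing argument using Bishop--Gromov shows that the disjoint balls of radius $\epsilon/2$ centered at net points each have volume at least $c(n) > 0$, so the net has at most $C(n) V$ points; these are the vertices of $K$. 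The same bounded-geometry input gives a uniform upper bound on the number of net points within distance $O(\epsilon)$ of any given point, which through the Delaunay construction translates into the desired bound $L(n)$ on the number of simplices incident to each vertex.

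For the second assertion, recall from the main text that a smooth $n$-manifold $M$ of complexity $V$ is, by definition, one that admits a Riemannian metric of bounded geometry with volume at most $C(n) V$. Applying the first assertion to any such metric yields a simplicial complex bilipschitz to $M$ with the required combinatorial bounds, and the bilipschitz equivalence supplies a triangulation in the topological sense.

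The main obstacle, to the extent that there is one, is marshaling the statement of \cite[Thm.~3]{BDG} into the form used here. Their result is phrased in the setting of reconstructing submanifolds of Euclidean space from point samples, and some care is needed to interpret their hypotheses (reach, curvature of the embedding, sampling density) in terms of the intrinsic Riemannian notion of bounded geometry, and to verify that their construction produces a global simplicial complex rather than only a local patch. Once this translation is in hand, the counting arguments above are routine and the two constants $C(n)$ and $L(n)$ emerge directly from the Bishop--Gromov packing estimate and from the local combinatorics of a Delaunay complex in a space of bounded geometry.
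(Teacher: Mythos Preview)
Your proposal is correct and matches the paper's own argument almost exactly: the paper also builds a greedy/maximal $\epsi$-net, invokes \cite[Thm.~3]{BDG} to perturb the net points and obtain a bilipschitz Delaunay-type triangulation with uniformly thick simplices, and then reads off the vertex count from a packing argument and the bound $L(n)$ from the curvature bound. The only minor discrepancy is your worry about translating BDG from an extrinsic to an intrinsic setting; the version of \cite{BDG} the paper cites is already intrinsic (it works directly on a Riemannian manifold via local exponential charts), so no such translation is needed.
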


\subsection*{The PL picture}
In dimensions $<8$ all PL manifolds are smoothable.  Therefore Theorems
\ref{thm:cob} and \ref{thm:tri} together imply that for $n \leq 6$, every PL
nullcobordant manifold with $V$ vertices and at most $L$ simplices meeting at a
vertex admits a PL filling with $C(n,L)\ph(V)$ vertices and at most $L$ simplices
meeting at a vertex, where $\ph(V)=o(V^{1+\epsi})$ for every $\epsi>0$.  For $n=3$,
this complements the result of Costantino and D.~Thurston \cite{CoTh} which gives
bounded geometry fillings of quadratic volume without imposing restrictions on
the local geometry of $M$.

On the other hand, in high dimensions the PL cobordism problem is still open, and
poses interesting issues since unlike in the smooth category, $B\mathbf{PL}$ is
not an explicit compact classifying space for PL structures.  We hope to return
to this in a future paper.

\subsection*{So, is it linear?}
Gromov's linearity conjecture appears even more interesting now that we know that
it is so close to being true.  On the other hand, at least in the oriented case,
linearity cannot be achieved by Thom's method.  Suppose that one could always
produce ``optimally space-filling'' embeddings $M \hookrightarrow S^N$, that is,
1-thick embeddings in a ball of radius $V^{1/N}$.  Even in this case, an oriented
filling would have volume $C(n,N)V^{1+1/N}$.

Moreover, recent results of Evra and Kaufman \cite{EvKa} on high-dimensional
expanders imply that, at least for simplicial complexes, the Gromov--Guth
embedding bound is near optimal and space-filling embeddings of this type cannot
be found.  While $n$-manifolds are quite far from being $n$-dimensional
expanders, it is possible that a similar or weaker but still nontrivial lower
bound can be found.  This would show that Thom's method is not sufficient for
constructing linear-volume unoriented fillings, either.

On the other hand, at the moment we cannot reject the possibility that it is
possible to find linear fillings for manifolds by some method radically different
from Thom's.  In particular, it is completely unclear how to go about looking for
a counterexample to Gromov's conjecture, although we believe that ideas related
to expanders may play a key role.

\section{PL embeddings with thick links}

In \cite{GrGu}, Gromov and Guth describe ``thick'' embeddings of $k$-dimensional
simplicial complexes in unit $n$-balls, for $n \geq 2k+1$.  They define the
thickness $T$ of an embedding to be the maximum value such that disjoint
simplices are mapped to sets at least distance $T$ from each other.
\cite[Thm.~2.1]{GrGu} gives a nearly sharp upper bound on the optimal thickness
of such an embedding in terms of the volume and bounds on the geometry.

This condition is insufficient to produce smooth embeddings of bounded geometry,
because as thickness decreases, adjacent 1-simplices of length $\sim 1$ may make
sharper and sharper angles.  In this section we show that Gromov and Guth's
construction can be improved to obtain embeddings that also have large angles.
Recall that the \emph{link} $\lk\sigma$ of a $i$-simplex $\sigma$ inside a
simplicial complex $X$ is the simplicial complex obtained by taking the locus of
points at any sufficiently small distance $\epsi>0$ from any point of $\sigma$ in
all directions normal to $\sigma$.  This complex contains an $(r-i)$-simplex for
every $r$-simplex of $X$ incident to $p$.  If $X$ is linearly embedded in
$\mathbb{R}^n$, there is an obvious induced embedding $\lk\sigma \to S^{n-i-1}$.
We show the following:
\begin{thm} \label{thm:PL}
  Suppose that $X$ is a $k$-dimensional simplicical complex with $V$ vertices and
  each vertex lying in at most $L$ simplices.  Suppose that $n \geq 2k+1$.  Then
  there are $C(n,L)$ and $\alpha(n,L)>0$ and a subdivision $X^\prime$ of $X$ which
  embeds linearly into the $n$-dimensional Euclidean ball of radius
  $$R \leq C(n,L)V^{\frac{1}{n-k}}(\log V)^{2k+2}$$
  with Gromov--Guth thickness 1 and such that for any $i$-simplex $\sigma$ of
  $X^\prime$, the induced embedding $\lk\sigma \to S^{n-i-1}$ is
  $\alpha(n,L)$-thick.
\end{thm}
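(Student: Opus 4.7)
The plan is to adapt the random-placement argument of Gromov and Guth \cite{GrGu} by adding a second genericity requirement corresponding to the link-thickness condition. Recall that in their construction one places the vertices of $X$ uniformly at random in a ball of radius $R$ of order $V^{1/(n-k)}(\log V)^{2k+2}$ and shows that with high probability disjoint simplices lie at distance at least $1$; any remaining violation is repaired by an iterative subdivision argument, with log factors absorbing the cost of subdivisions. My goal is to augment the probabilistic analysis so that on the same random trial, the link of every simplex is also $\alpha(n,L)$-thick as a subcomplex of the appropriate sphere.

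The crucial observation is that the link of an $i$-simplex $\sigma$ sits inside $S^{n-i-1}$ as a complex of dimension at most $k-i-1$, so ``$\lk\sigma$ is $\alpha$-thick'' is structurally the same condition as Gromov--Guth thickness, one dimension down. Since $n-i-1 \geq 2(k-i-1)+1$ whenever $n \geq 2k+1$, the codimension count remains favorable, and the probability that a fixed pair of disjoint simplices in $\lk\sigma$ is at angular distance less than $\alpha$ decays as a power of $\alpha$ by essentially the same estimate that Gromov and Guth use for pairs in $\mathbb{R}^n$. The bounded-star hypothesis gives only $O(LV)$ simplices, and for each $\sigma$ the link has at most $L$ simplices, so a union bound over all pairs of the form (simplex, disjoint pair in its link) still yields a probabilistic estimate guaranteeing both conditions simultaneously for the stated $R$ and a suitable $\alpha(n,L)>0$.

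The second ingredient is to interleave this with Gromov and Guth's iterative subdivision. When a violation of either type is detected, one subdivides the offending simplex (e.g., by a barycentric insertion) and re-randomizes the new vertex inside a local ball whose radius is small relative to the currently achieved thickness. One then checks that both the Gromov--Guth thickness and the link-thickness at the new vertex, and at all nearby older simplices whose links have been altered, can be restored with positive probability. Because the local simplex count within any star is bounded by $L$, this is again a union bound over a bounded set of near-coincidence events, and a single random draw succeeds in expectation.

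The main obstacle, and the reason for the polylogarithmic overhead, is the global bookkeeping: I need to show that simultaneously managing both conditions under the iterative subdivision does not cause a cascade in which fixing an angular defect creates a Gromov--Guth violation (or vice versa) that then requires its own subdivision, and so on. The natural remedy is to extend Gromov and Guth's global potential function by a term counting weighted link-thickness defects and to show that every subdivision strictly decreases the combined potential by a factor bounded away from $1$; the same telescoping argument would then bound the total number of subdivisions by $V(\log V)^{O(1)}$. Verifying that the potential actually has this monotonicity property — in particular that the constant $\alpha(n,L)$ does not degrade as $V$ grows — is the technical heart of the argument.
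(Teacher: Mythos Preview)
Your union bound for link thickness does not give an $\alpha$ independent of $V$. Link thickness is an angular, scale-invariant condition: enlarging $R$ does nothing for it. For a single simplex $\sigma$ the probability that some disjoint pair in $\lk\sigma$ lands within angular distance $\alpha$ is indeed $\lesssim_{n,L} \alpha^c$, but summing over the $\Theta(V)$ simplices gives total failure probability $\lesssim V\alpha^c$, which is only small when $\alpha \lesssim V^{-1/c}$. So either $\alpha$ must shrink with $V$, or after the random placement you are still left with $\Theta(V)$ link defects to repair; your iterative scheme would then have to eliminate all of them without letting $\alpha$ degrade, and that is exactly the unresolved ``technical heart'' you flag at the end.

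The paper sidesteps this by exploiting that link thickness is a \emph{purely local} constraint: $\lk\sigma$ depends only on the $\lesssim L$ vertices at combinatorial distance $\leq 2$ from $\sigma$. One samples vertex positions from the uniform measure on $(\partial B_R)^V$ \emph{conditioned} on every link being $\alpha_0(n,L)$-thick and adjacent vertices being $\gtrsim R$ apart. Because these constraints form a bounded-degree dependency graph, the conditional law of each vertex given the others is pointwise $\lesssim$ the uniform one, and the Gromov--Guth sparsity lemma (each unit ball meets $\lesssim \log V$ simplices) still goes through. One then applies a single uniform edgewise subdivision to make all edges of length $\sim 1$ (links stay $\gtrsim\alpha_0$-thick), and finally invokes the Gromov--Guth perturbation lemma once, moving each vertex by at most a fixed small $\tau(n,L)$, to obtain thickness $\gtrsim (\log V)^{-(2k+2)}$. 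Since $\tau$ is tiny relative to the edge lengths, link thickness survives. There is no iteration between the two conditions, no potential function, and no cascade to manage.
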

\begin{proof}
  The proof proceeds with the same major steps as in \cite{GrGu}.  We first show
  that a random linear embedding which satisfies the condition that all links are
  thick, while not having the right thickness, is sparse in a weaker sense: most
  balls have few simplices crossing them.  Gromov and Guth then show that the
  simplices can be bent locally, at a smaller scale, in order to thicken the
  embedding; this produces a linear embedding of a finer complex.  We note that
  if the scale is small enough, this finer, bent embedding also has thick links.

  We write $A \lesssim B$ for $A \leq C(n,L)B$ and $A \sim B$ to mean
  $B \lesssim A \lesssim B$.  Following Gromov--Guth, we actually embed $X$ in a
  $V^{\frac{1}{n-k}}$-ball with thickness $\sim (\log V)^{-(2k+2)}$; for simplicity,
  write $R=V^{\frac{1}{n-k}}$.

  We start by choosing, uniformly at random, an assignment of the vertices of $X$
  to points of $\partial B_R$ from those such that for some $\alpha_0(n,L)>0$, the
  following hold:
  \begin{enumerate}[leftmargin=*]
  \item Adjacent vertices are mapped to points at least distance $\alpha_0R$
    apart.
  \item The linear extension to an embedding of $X$ has $\alpha_0$-thick links.
  \end{enumerate}
  We call the resulting linear embedding $I_0(X)$.  We can choose $\alpha_0$ so
  that this is possible since the thickness of the link of some vertex $v$ (and
  of incident higher-dimensional simplices) only depends on the placement of
  vertices at most distance 2 away.  Moreover, this implies the following:
  \begin{enumerate}[leftmargin=*]
  \item[($*$)] The probability distribution of $v$ conditional on some prior
    distribution on the other vertices is pointwise $\lesssim$ the uniform
    distribution.  This follows from the fact that this is true even when all
    vertices within distance 2 from $v$ are fixed.

    This implies that given a $d$-simplex $\sigma$, the probability distribution
    of $\sigma$ (conditional on any distribution on the vertices outside
    $\sigma$) is likewise pointwise $\lesssim$ the uniform distribution where
    every vertex is mapped independently.
  \item[($\dagger$)] If $d(v,w) \leq 2$, then $v$ and $w$ are mapped at least
    $c_0(n,L)R$ units apart.  In particular, every embedded edge has length
    $\sim R$.
  \end{enumerate}
  \begin{lem}
    With high probability, each unit ball $B_1(p) \subset B_R$ meets
    $\lesssim \log V$ simplices of $I_0(X)$.
  \end{lem}
  \begin{proof}
    By an argument of Gromov--Guth, the probability that a random $B_1(p)$ meets
    a fixed $d$-simplex $\sigma$ is $\lesssim V^{-1}$.

    Therefore, the expected number of simplices hitting $B_1(p)$ is $\lesssim 1$.
    If each simplex hitting $B_1(p)$ was an independent event, then the
    probability that $S$ simplices meet $B_1(p)$ would be $\lesssim e^{-S}$;
    therefore, with high probability, for every $p$ the number of simplices
    hitting $B_1(p)$ would be $\lesssim \log V$.  Indeed, complete independence
    is not necessary for this; the condition ($*$) is sufficient.

    This condition holds when the simplices have no common vertices.  Therefore,
    we can finish with a coloring trick, as in Gromov--Guth.  We color the
    simplices of $X$ so that any two simplices that share a vertex are different
    colors.  This can be done with $(k+1)L$ colors.  With high probability, the
    number of simplices of each color meeting $B_1(p)$ is $\lesssim \log V$.
    Since the number of colors is $\lesssim 1$, we are done.
  \end{proof}
  Now we decompose each simplex into finer simplices, using the family of
  edgewise subdivisions due to Edelsbrunner and Grayson \cite{EdGr}.  This is a
  family of subdivisions of the standard $d$-simplex with parameter $L$ which has
  the following relevant properties:
  \begin{itemize}
  \item All links of interior vertices are isometric, and all links of boundary
    vertices are isometric to part of the interior link.
  \item The subdivided simplices fall into at most $\frac{d!}{2}$ isometry
    classes.  In particular, all edges have length $\sim 1/L$.
  \end{itemize}
  When we apply the edgewise subdivision with parameter $L$, with the appropriate
  linear distortion, to $I_0(X)$, we get an embedding $I_0(X')$ of a subdivided
  complex $X'$ such that all edges have length $\sim 1$ by ($\dagger$) and all
  links have thickness $\gtrsim \alpha_0$ and hence $\gtrsim 1$.

  Now we use the following lemma of Gromov--Guth:
  \begin{lem}
    For every $0<\tau \leq 1$, there is a way to move the vertices of $X'$ by
    $\leq \tau$ such that the resulting embedding $I_\tau(X)$ is
    $\gtrsim \tau\cdot(\log V)^{-(2k+2)}$-thick.
  \end{lem}
  If we choose $\tau(n,L)$ sufficiently small compared to the edge lengths of
  $I(X')$, then there is an $\alpha(n,L)$ such that however we move vertices by
  $\leq \tau$, the links will still be $\alpha$-thick.  Since these edge lengths
  are uniformly bounded below, this completes the proof.
\end{proof}

\section{Thick smooth embeddings}

We now use Theorem \ref{thm:PL} to build thick smooth embeddings of manifolds of
bounded geometry.
\begin{thm}
  Let $M$ be a closed Riemannian $m$-manifold with $\geo(M) \leq 1$ and volume
  $V$.  Then for every $n \geq 2m+1$, there is a smooth embedding
  $g:M \to \mathbb{R}^n$ such that
  \begin{itemize}
  \item $g(M)$ is contained in a ball of radius
    $R=C(m,n)V^{\frac{1}{n-m}}(\log V)^{2m+2}$.
  \item For every unit vector $v \in TM$,
    $$K_0(m,n)R \leq \lvert Dg(v) \rvert \leq K_1(m,n)R.$$
  \item The \emph{reach} of $g$ is greater than $1$, that is, the extension of
    $g$ to the exponential map on the normal bundle of vectors of length
    $\leq 1$ is an embedding.
  \end{itemize}
\end{thm}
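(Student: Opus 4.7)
The plan is to triangulate $M$, produce a PL embedding of the triangulation via Theorem~\ref{thm:PL}, and then smooth the result while preserving the thick-link condition as the reach of a smooth embedding. First, apply Theorem~\ref{thm:tri} to obtain a simplicial complex $X$ with $\lesssim V$ vertices and at most $L(m)$ simplices at each vertex, together with a $C(m)$-bilipschitz identification $\Phi : M \to |X|$. Then Theorem~\ref{thm:PL}, applied with $k = m$ and the given $n \geq 2m+1$, produces a subdivision $X^\prime$ and a linear embedding $\iota : X^\prime \to B_R \subset \mathbb{R}^n$ of Gromov--Guth thickness $1$ whose links are all $\alpha(m,n)$-thick. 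The composition $f := \iota \circ \Phi$ is a PL embedding of $M$ into $B_R$. A scale tracking through the proof of Theorem~\ref{thm:PL} (where each $1$-simplex of $X$ is subdivided into $\sim V^{1/(n-m)}$ pieces) shows that on each simplex of the subdivision, $f$ is bilipschitz with $K_0 R \leq |Df| \leq K_1 R$ for constants depending only on $m$ and $n$, delivering items~(1) and (2) up to a uniform rescaling absorbed into $C(m,n)$, $K_0$, $K_1$.

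The main work is to smooth $f$ to a smooth embedding $g$ while retaining the thickness. The PL image $f(M)$ is smooth away from the $(m{-}1)$-skeleton of $X^\prime$, and near the interior of an $i$-simplex $\sigma$ it looks like $\sigma$ times the PL cone on $\iota(\lk\sigma) \subset S^{n-i-1}$. Because this link is $\alpha$-thick and contains at most $L$ simplices, the cone is a union of a bounded number of affine pieces meeting at angles bounded uniformly below, and can be smoothed to a standard model that differs from the PL version only inside a tubular neighborhood of $\sigma$ of some small radius $\rho$ in the image metric. I would construct $g$ by assembling such local smoothings via a partition of unity subordinate to the open stars of the vertices of $X^\prime$, each local modification performed by normal convolution with a bump of scale $\ll \rho$. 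Taking $\rho < 1/4$, the Gromov--Guth thickness $1$ of $f$ ensures that smoothings around disjoint simplices do not interact, so the assembled $g$ is a globally smooth embedding.

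The crucial point is verifying that $g$ has reach $\geq 1$; this is the main obstacle, and it requires both a local and a global estimate. Locally, the $\alpha$-thickness of each link bounds the second fundamental form of the smoothed cone model by a constant depending only on $\alpha$, $L$, and $\rho$, giving a lower bound on the local reach that is at least a positive constant times $R$ (since the smoothing is bilipschitz at the same scale $R$ as $f$ and does not create features finer than $\rho$). Globally, Gromov--Guth thickness $1$ of $f$ together with $\rho < 1/4$ keep smoothings of disjoint simplices at distance $\geq 1/2$ from one another, precluding long-range reach failures. A final uniform rescaling by some $c(m,n) \geq 1$, absorbed into the constants $C(m,n)$, $K_0(m,n)$, $K_1(m,n)$, simultaneously achieves $g(M) \subset B_R$, the bilipschitz bounds~(2), and reach~$\geq 1$.
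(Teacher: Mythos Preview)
Your overall route---triangulate $M$ via Theorem~\ref{thm:tri}, apply Theorem~\ref{thm:PL}, then smooth---matches the paper.  The gap is in the smoothing step.  Your construction rounds off the PL image near each skeleton stratum and glues via a partition of unity on the open stars of $X'$; this produces a smooth \emph{submanifold} of $\mathbb{R}^n$, but not a smooth \emph{map} from $M$.  The identification $\Phi:M\to |X|$ supplied by Theorem~\ref{thm:tri} is only a bilipschitz homeomorphism, so operations carried out in the PL coordinates of $X'$ (the open-star partition of unity, the ``normal convolution'' of cones) do not pull back to smooth functions on the smooth manifold $M$.  You would still need to exhibit a controlled smooth diffeomorphism from $M$ to your smoothed image, and that is essentially the whole content of the smoothing step---it is not automatic even in the Whitney range.

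The paper handles this by never leaving the smooth structure of $M$: it fixes a bounded atlas of smooth charts $\{\phi_i:B_\mu\to M\}$ (using bounded geometry), and then inductively replaces the non-smooth map $f\circ\phi_i$ on each chart by a nearby smooth embedding, invoking the weak Whitney embedding theorem for the existence of such approximants.  The uniform bounds on reach and on $|Dg|$ come not from direct curvature estimates on smoothed cones, but from a compactness argument: the local PL pictures live in a compact space (Arzel\`a--Ascoli plus bounded combinatorics and thick links), so the smooth approximants and the bump-function interpolations between them can be drawn from a fixed $C^\infty$-compact family, forcing a uniform positive lower bound on the reach.  A secondary issue with your sketch is that a partition-of-unity average of local embeddings need not be an embedding, and even when it is, the transition region contributes extra second fundamental form that you do not estimate; the paper's chart-by-chart interpolation faces the same hazard and absorbs it into the compactness argument rather than bounding it explicitly.
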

\begin{proof}
  We prove this by reducing it to Theorem \ref{thm:PL}.  That is, first we build
  a simplicial complex which is bilipschitz to $M$, with a bilipschitz constant
  depending only on $m$.  We apply Theorem \ref{thm:PL} to this complex to obtain
  a PL embedding, and then smooth it out, using the fact that PL embeddings in
  the Whitney range are always smoothable.  The quantitative bound on the
  smoothing follows from the fact that the local behavior of the PL embedding
  comes from a compact parameter space, allowing us to choose from a compact
  parameter space of local smoothings.

  Throughout this proof we write $A \lesssim B$ to mean $A \leq C(m,n)B$.  This
  is different from the usage in the previous section.

  The first step is achieved by the following result.
  \begin{thm}
    There is a simplicial complex $X$ with at most $L=L(m)$ simplices meeting at
    each vertex and a homeomorphism $h:X \to M$ which is $\ell$-bilipschitz for
    some $\ell=\ell(m)$ when $X$ is equipped with the standard simplexwise
    metric.
  \end{thm}
  \begin{proof}
    We start by constructing an $\epsi$-net $x_1,\ldots,x_V$ of points on $M$ for
    an appropriate $\epsi=\epsi(m)>0$.  We do this greedily: once we've chosen
    $x_1,\ldots,x_t$, we choose $x_{t+1}$ so that it is outside
    $\bigcup_{i=1}^t B_\epsi(x_i)$.  In the end we get a set of points such that the
    $\frac{\epsi}{2}$-balls around them are disjoint and the $\epsi$-balls cover
    $M$.

    Now, \cite[Theorem 3]{BDG} in particular gives the following:
    \begin{lem}
      If $\epsi(m)$ is small enough, there is a perturbation of $x_1,\ldots,x_V$
      to $x_1',\ldots,x_V' \in M$ and a simplicial complex $X$ with a bilipschitz
      homeomorphism $X \to M$ as well as the following properties:
      \begin{itemize}
      \item Its vertices are $x_1',\ldots,x_V'$.
      \item It is equipped with the piecewise linear metric determined by edge
        lengths $d(x_i',x_j')$ which are geodesic distances in $M$.
      \item Its simplices have ``thickness'' $\geq C(m)$; this is defined to be
        the ratio of the least altitude of a vertex above the opposite face to
        the longest edge length.  In particular, since the edge lengths are
        $\sim \epsi$, this means that each simplex is $C(m)$-bilipschitz to a
        standard one.
      \end{itemize}
    \end{lem}
    This automatically gives a bilipschitz map to $X$ with the standard
    simplexwise metric.  Moreover, since $M$ has sectional curvatures $\leq 1$,
    we immediately get a uniform bound on the local combinatorics of $X$.
  \end{proof}
  After applying this result to get $h:X \to M$, we apply Theorem \ref{thm:PL},
  finding an embedding $X' \to \mathbb{R}^n$ of a subdivision $X'$ of $X$ which
  is 1-thick, lands in an $R$-ball for $R=C(m)V^{\frac{1}{n-k}}(\log V)^{2k+2}$,
  has $\alpha(m)$-thick links, and expands all intrinsic distances by
  $\sim R$.  In other words, we get a PL embedding $f:M \to \mathbb{R}^n$.

  For the sake of uniformity, we expand the metric of $M$ by a factor of $R$;
  then we see that the embedding $f$ is locally uniformly bilipschitz.  That is,
  for any $x,y \in M$ such that $d(x,y) \leq 1$,
  $$d(f(x),f(y)) \sim d(x,y).$$

  As in the main part of the paper, we assume that $M$ additionally has
  controlled $k$th covariant derivatives of its curvature tensor for every $k$.
  This allows us, like in Lemma 6.1, to fix an atlas
  $\mathfrak{U}=\{\phi_i:B_\mu \to M\}$ for $M$, with the following properties:
  \begin{enumerate}
  \item The $\phi_i(B_{\mu/2})$ also cover $M$.
  \item $\mathfrak{U}$ is the disjoint union of sets
    $\mathfrak{U}_1,\ldots,\mathfrak{U}_r$ each consisting of pairwise disjoint
    charts.
  \item The charts are uniformly bilipschitz, and the $k$th derivatives of all
    transition maps between charts are uniformly bounded depending only on $m$
    and $n$.
  \end{enumerate}
  Here $\mu$ and $r$ both depend only on $m$ and $n$.  We construct our smoothing
  first on $\mathfrak{U}_1$, then extend to $\mathfrak{U}_2$, and so on by
  induction.

  At each step of the induction, we use the following form of the weak Whitney
  embedding theorem \cite[\S2.2, Thm.~2.13]{Hirsch}: for $s \geq 2r+1$, the set
  of smooth embeddings $D^r \to \mathbb{R}^s$ is $C^0$-dense in the set of
  continuous maps.  Moreover, the set of smooth maps which restrict to some
  specific smooth map on a closed codimension zero submanifold is likewise dense
  in the set of such continuous maps \cite[\S2.2, Ex.~4]{Hirsch}.

  The strategy is as follows.  Note that the space of $L$-bilipschitz maps
  $B_\mu \to \mathbb{R}^n$ up to translation is compact by the Arzel\`a--Ascoli
  theorem.  At every stage, we also have a $C^\infty$-compact space of possible
  partial local smoothings.  Then Whitney will allow us to choose an extension
  from a space of possibilities which is also $C^\infty$-compact.

  We now give a detailed account of the inductive step.  Suppose that we have
  defined a partial smooth embedding $g:K \to \mathbb{R}^n$, where $K$ is a
  compact codimension zero submanifold of $M$ with
  \begin{equation} \label{domain}
    \bigcup_{\substack{\phi \in \mathfrak{U}_i\\1 \leq i<j}}
    \phi\left(B_{\mu\cdot\frac{2r-j}{2r}}\right) \subset K \subset
    \bigcup_{\substack{\phi \in \mathfrak{U}_i\\1 \leq i<j}} \phi(B_\mu).
  \end{equation}
  Moreover, suppose that $g$ is $\rho_{j-1}$-close to $f$ for some sufficiently
  small $\rho_{j-1}$ depending on $m$ and $n$, and that for each
  $\phi \in \mathfrak{U}_i$, $i<j$, the partially defined function $g \circ \phi$
  is an element of a $C^\infty$-compact moduli space $\mathcal{L}_{j-1}$ of maps
  each from one of a finite set of subdomains of $B_\mu$ to $D^n$.

  Fix a fine cubical mesh in $B_\mu$; it should be fine enough that any transition
  function sends a distance of $\mu/2r$ to at least four times the diagonal of
  the cubes.  The purpose of this mesh is to provide a uniformly finite set of
  subsets on which maps may be defined.  Then, again by Arzel\`a--Ascoli, for any
  set $K$ which is a union of cubes in this mesh, the space of potential
  transition maps $K \to B_\mu$ satisfying the bounds on the covariant derivatives
  in all degrees is $C^\infty$-compact.

  Fix $\phi \in \mathfrak{U}_j$.  By the above, $g|_{K \cap \phi(B_\mu)} \circ \phi$,
  again restricted to the union of cubes on which it is fully defined (call this
  domain $\hat K \subset B_\mu$), is also chosen from a $C^\infty$-compact moduli
  space $\mathcal{M}_j$, whose elements are patched together from a bounded
  number of compositions of elements of $\mathcal{L}_{j-1}$ with transition maps
  as above.  Of course, $\mathcal{M}_1$ consists of the unique map from the empty
  set.

  Let $\mathcal{N}_j$ be the $C^0$-compact set of $L$-bilipschitz embeddings
  $B_{\mu(1-1/2r)} \to D^n$.  Notice that the subset
  $\Delta \subset \mathcal{M}_j \times \mathcal{N}_j$ consisting of pairs whose
  $C^0$ distance is $\leq \rho_{j-1}$ is compact; this $\Delta$ contains the pair
  $(g|_{\hat K} \circ \phi,f \circ \phi)$.

  Fix a smooth embedding $u:B_{\mu(1-1/2r)} \to D^n$.  We say that
  $(\ph,\psi) \in \Delta$ is \emph{$\epsi$-good for} $u$, for some $\epsi>0$, if:
  \begin{itemize}
  \item The $C^0$ distance between $u$ and $\psi$ is $<\rho_j$, where
    $\rho_j>\rho_{j-1}$ is fixed.
  \item The map interpolating between $\phi$ and $u$ via a bump function, only
    depending on $\hat K$, whose transition lies within the layer of cubes
    touching the boundary of $\hat K$, has reach $>\epsi$.  (Here, we simply
    delete all boundary cubes outside of $B_{\mu\cdot(1-1/2r)}$ from the domain.
    Thus at this step the domain of $g$ actually recedes slightly; this is the
    motivation for the condition \eqref{domain}.)
  \end{itemize}
  For any fixed pair $(u,\epsi)$, these are both open conditions in $\Delta$, so
  there is an open set $V_{u,\epsi} \subseteq \Delta$ of good pairs $(\ph,\psi)$.
  Moreover, since (by Whitney) we can always choose a $u$ which coincides on
  $\hat K$ with a given element of $\mathcal{M}_j$, these sets cover $\Delta$.
  Therefore we can take a finite subcover corresponding to a set of pairs
  $(u_i,\epsi_i)$.  Taking a cover by compact subsets subordinate to this, we get
  a compact set of allowable extensions of elements of $\mathcal{M}_j$ to
  $B_{\mu\cdot(1-1/2r)}$; together with the modified sets of allowable maps on
  previous $\mathfrak{U}_i$'s (cut back so as to be defined on a domain of cubes)
  this makes $\mathcal{L}_j$.

  We choose an extension of $g$ from the set of allowable extensions above.
  Doing this for every $\phi \in \mathfrak{U}_j$ completes the induction step,
  giving some bound on the local geometry and reach by the compactness argument.
  Moreover, if we pick $\rho_j$ small enough compared to $\mu/2r$, then the
  embedding outside $\phi(B_\mu)$ stays far enough away from the embedding inside.
  Nevertheless, all of these bounds become worse with every stage of the
  induction.

  At the end of the induction, we have a smooth embedding of $M$.  Every choice
  we made was from a compact set of local smoothings depending ultimately only on
  $m$ and $n$, which in turn controlled various bilipschitz and $C^k$ bounds.
  Thus the resulting submanifold $\tilde M=g(M) \subset B_R$ has
  $\geo(\tilde M) \lesssim 1$.  For the same reason, $g$ (as a map from $M$ with
  its original metric) has all directional derivatives $\sim R$.  Moreover, since
  we didn't move very far from $f$, points from disjoint simplices can't have
  gotten too close to each other.  This, together with the local conditions,
  shows that $\tilde M$ has an embedded normal bundle of radius $\gtrsim 1$.  By
  expanding everything by some additional $C(m,n)$ we achieve the bounds desired
  in the statement of the theorem.
\end{proof}

\subsection*{Acknowledgements}

The authors would like to thank Larry Guth and Sasha Berdnikov for stimulating
correspondence during the course of this work, and two anonymous referees for
helpful comments regarding the exposition.

\bibliographystyle{amsalpha}
\bibliography{eff_whitney}
\end{document}